\newtheorem{sat}{Theorem}[section]		\newtheorem{lem}[sat]{Lemma}
			\newtheorem{prop}[sat]{Proposition}
\newtheorem*{defi*}{Definition}			\newtheorem*{bei*}{Example}
\newtheorem*{sat*}{Theorem}				\newtheorem*{kor*}{Corollary}
\newtheorem*{rmk*}{Remark}					
\newtheorem*{quest*}{Question}	
\let\ssection=\section
\renewcommand{\section}{\setcounter{equation}{0}\ssection}
\newtheorem*{namedtheorem}{\theoremname}
\newcommand{\theoremname}{testing}
\newenvironment{named}[1]{\renewcommand{\theoremname}{#1}\begin{namedtheorem}}{\end{namedtheorem}}
\theoremstyle{remark}
\newtheorem*{bem}{Remark}
\newcommand{\BC}{\mathbb C}			\newcommand{\BH}{\mathbb H}
\newcommand{\BR}{\mathbb R}			
\newcommand{\BN}{\mathbb N}			
\newcommand{\BS}{\mathbb S}			\newcommand{\BZ}{\mathbb Z}
		\newcommand{\CB}{\mathcal B}
		\newcommand{\calD}{\mathcal D}
		\newcommand{\CN}{\mathcal N}
\newcommand{\CS}{\mathcal S}
\newcommand{\D}{\partial}
\DeclareMathOperator{\PSL}{PSL}		%	Spezielle lineare Gruppe
\DeclareMathOperator{\Isom}{Isom}	%	Isometrien einer Mf
\DeclareMathOperator{\diam}{diam}
\newcommand{\comment}[1]{}
\begin{document}

\title[]{A Cantor set with hyperbolic complement}
\author{Juan Souto and Matthew Stover}
\thanks{Juan Souto was partially supported by NSERC Discovery and Accelerator Supplement grants. Matthew Stover was partially supported by NSF RTG grant DMS 0602191}
\dedicatory{To Dick Canary on the occasion of his $50^{th}$ birthday}

\begin{abstract}
We construct a Cantor set in $\BS^3$ whose complement admits a complete hyperbolic metric.
\end{abstract}

\maketitle

\section{Introduction}

Recall that a {\em Cantor set} is a metrizable compactum which is totally disconnected and has no isolated points. While any two Cantor sets are homeomorphic to each other, it is well-known that there are Cantor sets embedded in Euclidean space such that no homeomorphism between them extends to an ambient self-homeomorphism. The first example of this phenomenon is due to Antoine \cite{Antoine} who constructed a Cantor set in $\BR^3$ whose complement is not simply connected, and hence not homeomorphic to the complement of the standard dyadic Cantor set. Following Antoine's work there has been a small industry devoted to constructing examples of {\em wild} Cantor sets in $\BR^3$, or more generally $\BR^n$, having various pathological properties; see for example \cite{Bestvina-Cooper,Bing,DO,FS,Sher,Skora} and the references therein. In this note we construct yet another example of a wild Cantor set:

\begin{sat}\label{sat1}
There is a Cantor set $C\subset\BS^3$ whose complement $\BS^3\setminus C$ admits a complete hyperbolic metric.
\end{sat}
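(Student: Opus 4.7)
The plan is to mimic Antoine's inductive solid-torus construction of a wild Cantor set in $\BS^3$, but choose the combinatorics at every stage so that each intermediate link complement is hyperbolic and so that the hyperbolic metric on $\BS^3\setminus C$ arises as a geometric limit of the hyperbolic metrics at finite stages.

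Concretely, I would start with an unknotted solid torus $V_1\subset\BS^3$ and, inductively, given a disjoint union of solid tori $V_n\subset\BS^3$, replace each component $T\subset V_n$ by a cyclic chain of $k$ very thin solid tori $T_1,\dots,T_k\subset\operatorname{int}(T)$ embedded in the standard Antoine pattern; this yields $V_{n+1}\subset\operatorname{int}(V_n)$. I would take $k$ large (for instance $k\geq 5$) and the embeddings generic, so that by standard link-hyperbolicity results (Adams-type arguments, augmented links, or belted-sum techniques applied inductively) the total core link $L_n\subset\BS^3$ of $V_n$ is hyperbolic. Shrinking the tube diameters uniformly, the intersection $C:=\bigcap_n V_n$ is compact, totally disconnected, and without isolated points, hence a Cantor set, and it is wild because of the nontrivial Antoine-style linking at every scale. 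Each $M_n:=\BS^3\setminus V_n$ is homeomorphic to the hyperbolic link complement $\BS^3\setminus L_n$, so carries a unique complete finite-volume hyperbolic metric $g_n$ by Mostow rigidity, and $\BS^3\setminus C=\bigcup_n M_n$ is an ascending union of hyperbolic $3$-manifolds.

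The main obstacle is producing a single complete hyperbolic metric on the limit union, since a priori the metrics $g_n$ are mutually incompatible. The plan is to uniformize each $M_n=\BH^3/\Gamma_n$ with a fixed baseframe and to exploit the freedom in the construction to place every level-$(n{+}1)$ sub-chain deep inside horoball neighborhoods of the cusps of $(M_n,g_n)$. Quantitative drilling estimates in the spirit of Hodgson--Kerckhoff and Brock--Bromberg then force $(M_n,g_n)$ and $(M_{n+1},g_{n+1})$ to agree on arbitrarily large thick parts, so the representations $\Gamma_n$ converge algebraically and geometrically to a Kleinian group $\Gamma_\infty$ with $\pi_1(\BS^3\setminus C)=\varinjlim\pi_1(M_n)\cong\Gamma_\infty$. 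A comparison of compact-core exhaustions then identifies $\BH^3/\Gamma_\infty$ with $\BS^3\setminus C$, giving the required hyperbolic metric. The delicate point is balancing the two competing demands: the tubes must shrink in diameter fast enough to yield a genuine Cantor set, yet each new sub-chain must be placed deep enough inside its cusp that the drilling perturbation is geometrically small at every step, so that the infinite iteration still converges.
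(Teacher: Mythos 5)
There is a fatal problem with the very first step of your construction, and it is precisely the problem the paper is designed to avoid. For $n\ge 2$ the core link $L_n$ of an Antoine-type family of solid tori is a satellite link: each stage-$n$ chain lies essentially inside a stage-$(n-1)$ solid torus $T$, and the boundary torus $\D T$ is incompressible and not boundary-parallel in $\BS^3\setminus L_n$ (this is the classical Antoine/Sher argument -- a meridian disk of $T$ must meet the chain, and the complementary side is likewise incompressible). Hence $\BS^3\setminus L_n$ contains an essential non-peripheral torus and, by Thurston's trichotomy for link complements, is \emph{never} hyperbolic, no matter how large you take $k$ or how ``generic'' you make the embeddings. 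So the metrics $g_n$ you want to perturb and pass to the limit do not exist past the first stage, and the drilling estimates of Hodgson--Kerckhoff/Brock--Bromberg have nothing to act on. Worse, the obstruction survives in the limit: the tori $\D T$ remain embedded and incompressible in $\BS^3\setminus C$, and neither complementary component is a product $T^2\times[0,\infty)$; since every $\BZ^2$ subgroup of a torsion-free Kleinian group is parabolic and an embedded incompressible torus in a complete hyperbolic $3$-manifold must cut off a cusp neighborhood, \emph{no} Cantor set produced by nesting solid tori in an essential Antoine pattern can have hyperbolic complement. The framework, not the fine-tuning, is what fails.

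This is exactly why the paper replaces circles by $\theta$-graphs: the stages $V_n$ become unions of genus-$2$ handlebodies, so every separating surface in sight has negative Euler characteristic, and Myers's theorem lets one arrange each gap $\overline{V_{n-1}\setminus V_n}$ to be \emph{truly excellent} (irreducible, atoroidal, acylindrical, no torus boundary). The analytic engine is then different from the one you propose: instead of quantitative drilling, the paper uses Thurston's compactness theorem for acylindrical manifolds to control the restrictions of the representations $\rho_n$ to each fixed $\pi_1(K_k)$, a diagonal argument plus J{\o}rgensen to get a discrete faithful limit, and finally least-area surfaces together with Freedman--Hass--Scott and Waldhausen to promote the resulting homotopy equivalence to a homeomorphism (this last step is also where your sketch is vaguest -- ``a comparison of compact-core exhaustions'' does not by itself identify an infinitely generated $\BH^3/\Gamma_\infty$ with $\BS^3\setminus C$). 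If you want to salvage your outline, the first thing to change is the shape of the pieces, not the placement of the chains.
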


To construct the Cantor set $C$ provided by Theorem \ref{sat1} we will mimic the construction of Antoine's necklace, using knotted and linked $\theta$-graphs instead of circles. The bulk of the work is to give a sufficient condition for an open manifold with infinite topology to admit a complete hyperbolic metric. More concretely, we prove that an open $3$-manifold is hyperbolic if it admits a nested exhaustion $M=\bigcup_nK_n$ such that the closure of $K_n\setminus K_{n-1}$ is acylindrical for all $n$ and such that the genus of each component of $\D K_n$ is bounded by some constant independent of $n$. This result is not going to surprise any expert on Kleinian groups, and the proof uses rather standard arguments.
\medskip

In this note we just prove Theorem \ref{sat1} as stated above. However, an argument that is painful but relatively straightforward for experts shows that the Cantor set $C$ can also be constructed so that {\em any two orientation preserving embeddings of $\BS^3\setminus C$ into $\BS^3$ are isotopic to each other} (compare with \cite{balkanic,Kent-Souto}). One can also construct $C$ in such a way that {\em the hyperbolic metric on $\BS^2\setminus C$ is unique up to isometry}. %\marginpar{\tiny M: Added a one sentence sketch.\\ J: I removed it because, although totally correct in principle, it is not very clear what one means because the structure is not natural, at least not without any explanation. I agree that letting it as I wrote it isa bit too cheeky, but I fear that giving an even meaningful sketch of the argument can be painful because it could end up involving quite a few facts of the paper with Richard. I would just let it like this and see if the ref complains. In any case, I see these two remarks as leading to the question afterwards which I have no clue how to answer.}%Fundamental in each argument is that our exhaustion of $\BS^3\setminus C$ has a natural structure of a rooted tree with a topologically distinct initial vertex from which one can inductively apply standard rigidity results for hyperbolic manifolds. 
In fact, the following is, at least from the point of view of Kleinian groups, an interesting question:

\begin{quest*}
Is there a Cantor set $C\subset\BS^3$ whose complement admits non-isometric complete hyperbolic metrics?
\end{quest*}

Also, note that, as is the case with Antoine's necklace \cite{Sher}, %there are uncountably many Cantor sets satisfying the conclusion of Theorem \ref{sat1}.
one can use Theorem \ref{sat1} to construct uncountably many homeomorphism classes of Cantor set complements in $\BS^3$ admitting a complete hyperbolic metric.%\marginpar{\tiny M: Slight change here.}
\medskip

This paper is organized as follows. In section \ref{sec-top} we discuss a few facts from $3$-dimensional topology used later on. In section \ref{sec-geom} we show that $3$-manifolds that admit what we call a {\em nested exhaustion with truly excellent gaps} are hyperbolic. Theorem \ref{sat1} is proved in section \ref{sec-cantor}.
\medskip

\noindent{\bf Acknowledgements.} The first author would like to thank Mario Bonk and Vlad Markovic for many amusing conversations on Cantor sets and other topics. The images in this paper were created with Google Sketchup.

\section{}\label{sec-top}
We refer to \cite{Jaco} and \cite{Marden} for basic facts from $3$-manifold topology and hyperbolic geometry.

Following Myers \cite{Myers}, we say that a compact orientable $3$-manifold $M$ is {\em excellent} if it is irreducible, atoroidal, and acylindrical. An excellent $3$-manifold all of whose boundary components have negative Euler characteristic is {\em truly excellent}. Suppose that $M$ is a compact orientable $3$-manifold whose boundary $\D M$ does not contain $2$-spheres. It follows from Perelman's proof of the Poincare conjecture that $M$ is truly excellent if and only if its fundamental group $\pi_1(M)$ is infinite, does not contain $\BZ^2$ as a subgroup, and splits neither over the trivial group nor over $\BZ$. Yet another characterization, due to Thurston in the presence of boundary and to Perelman in general, is that a compact manifold $M$ is truly excellent if and only if it admits a hyperbolic metric with totally geodesic boundary.

\begin{bem}
Notice that it follows from the observations above that every compact $3$-manifold $M$ which is homotopy equivalent to a (truly) excellent manifold $M'$ is (truly) excellent as well. In particular, it follows from the work of Johannson \cite{Johannson} that $M$ and $M'$ are actually homeomorphic.
\end{bem}

Later on we will need to use over and over again that appropriately glued truly excellent $3$-manifolds yield again a truly excellent manifold. Before stating what we will need in a lemma, recall that a subgroup $H$ of a group $G$ is {\em malnormal} if 
$$\{g\in G\ \vert\ gHg^{-1}\cap H\neq 1_G\} = H.$$
In terms of covering theory this translates to the following fact: if $X$ is a simply connected space on which $G$ acts freely and discretely, and if $\gamma,\gamma'\subset X/H$ are homotopically essential closed curves whose images under the covering map $\pi:X/H\to X/G$ are equal, $\pi(\gamma)=\pi(\gamma')$, then $\gamma=\gamma'$.%\marginpar{\tiny J: is this right?\\  M: Probably, but I changed to `freely homotopic' to be safe.\\ J: I changed it back because I think that the freely homotopic statement is not good enough to prove straight away Lemma \ref{lem2}.}

%\begin{quote}
%{\tiny (To be removed.) Here is the argument for the covering interpretation of malnormal: For simplicity notice that we can assume by general position blah blah that $\pi(\gamma)=\pi(\gamma')$ is simple. Choose a base point $*\in\gamma$ and let $*'\in\gamma'$ be such that $\pi(*)=\pi(*')$. Let also $\eta$ be an arc in $X/H$ joining $*$ and $*'$. By construction $\pi(\eta)$ is an a closed loop in $X/G$, i.e. an element of $G$, and we have that $\pi(\eta)\pi(\gamma)\pi(\eta)^{-1}$ lifts to the closed loop $\eta\gamma\eta^{-1}$ in $X/H$, meaning that $\pi(\eta)\pi(\gamma)\pi(\eta)^{-1}\in H$. Since $H$ is malnormal in $G$ it follows that $\pi(\eta)\in H$ and hence that $\eta$ is a closed loop instead of a mere arc. This shows that $*'=*$. Since $\gamma$ and $\gamma'$ are both lifts of the same simple curve and share a point we get that $\gamma=\gamma'$ as claimed.

%A remark: Sometimes malnormal is defined to say that if two elements of $H$ are conjugated in $G$ then they are also conjugated in $H$. This is what your condition gave. However, the definition given here is stronger because one asks that the conjugating element belongs to $H$. In terms of free homotopies and coverings, I think that it means that every free homotopy in $X/G$ lifts to a free homotopy in $X/H$. For manifolds whose $\pi_1$ does not contain higher rank abelian groups both things are the same...}
%\end{quote}

\begin{lem}\label{gluing}
Suppose that $N$ is a compact oriented $3$-manifold with boundary $\D N$ and that $S\subset\D N$ a disconnected subsurface of the boundary. Let $\tau:S\to S$ be an orientation reversing involution that preserves no connected component of $S$. Finally, consider the oriented manifold $M=N/\tau$ obtained by gluing $N$ according to $\tau$ and suppose that
%\marginpar{\tiny J: I liked more the earlier statement of Lemma \ref{gluing} because it allows for gluing a manifold to itself. Also, I found your statement slightly redundant because of having to carry over all the time $j=1,2$}
\begin{itemize}
\item $N$ is truly excellent,
\item each component of $S$ has negative Euler characteristic and is $\pi_1$-injective in $\D N$, and
\item $\D M$ does not contain tori.
\end{itemize}
Then $M$ is truly excellent, and moreover:
\begin{enumerate}
\item The image of every component $\Sigma$ of $S$ under the inclusion $\pi:N\to M$ is a $2$-sided incompressible surface.
\item If $\Sigma$ (resp.~$U$) is a connected component of $S$ (resp.~$N$) then the subgroup $\pi_*(\pi_1(S))$ (resp.~$\pi_*(\pi_1(U))$) is malnormal in $\pi_1(M)$. 
\end{enumerate}
\end{lem}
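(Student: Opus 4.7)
The plan is to verify the topological characterization of true excellence for $M$ by analysing how essential surfaces meet the image $\pi(S)$ of $S$ in $M$, and to deduce malnormality from a Bass--Serre tree argument. The key technical point is that cutting $M$ along $\pi(S)$ recovers $N$, so essential surfaces in $M$ can be studied by intersecting with $\pi(S)$ and pulling back.

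\textbf{Two-sidedness and incompressibility.} Since $N$ is truly excellent, each component of $\D N$ is incompressible; combined with the hypothesis that each component of $S$ is $\pi_1$-injective in $\D N$, this makes $S$ incompressible in $N$. Two-sidedness of $\pi(\Sigma)$ in $M$ is automatic since $\tau$ is orientation reversing and $M$ is oriented. For incompressibility in $M$, suppose $D\subset M$ is a compressing disk and isotope so that $|D\cap\pi(S)|$ is minimal. An innermost intersection circle bounds a subdisk $D'\subset D$ which, after cutting $M$ along $\pi(S)$, sits in $N$ with boundary on $S$. Incompressibility of $S$ in $N$ forces $\D D'$ to bound a disk in $S$; together with $D'$ this is a $2$-sphere in $N$ which, by irreducibility, bounds a ball, so $D'$ can be isotoped across to reduce $|D\cap\pi(S)|$, a contradiction. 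Iterating, we reach $D\cap\pi(S)=\D D$, placing $D$ in $N$ as a compressing disk for $S$, again contradicting incompressibility. This proves (1).

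\textbf{True excellence of $M$.} Irreducibility of $M$ follows from the same innermost-circle reduction applied to embedded $2$-spheres, using incompressibility of $\pi(\Sigma)$ just proved together with irreducibility of $N$. For atoroidicity and acylindricity, take an essential torus or essential annulus $T\subset M$ and minimize $|T\cap\pi(S)|$; the intersections are circles essential in both surfaces. Cutting along $\pi(S)$ yields essential annuli in $N$ with boundary on $S$, which must be boundary parallel by acylindricity of $N$. A case analysis of the parallelism regions shows each such piece either absorbs into $\pi(S)$ to reduce $|T\cap\pi(S)|$ or forces $T$ to be boundary parallel or inessential in $M$. If $T\cap\pi(S)=\emptyset$, then $T$ lies in $N$ and must be boundary parallel there, but no component of $\D N$ is a torus (components of $S$ have $\chi<0$ by hypothesis and $\D M$ is torus-free), contradicting that $T$ is essential. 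Finally, $\D M=\D N\setminus S$, whose components have $\chi<0$ by true excellence of $N$, so $M$ is truly excellent.

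\textbf{Malnormality.} The construction $M=N/\tau$ equips $\pi_1(M)$ with a graph-of-groups decomposition whose vertex groups are the $\pi_1(U)$ for components $U$ of $N$, whose edge groups are the $\pi_1(\Sigma_j)$ for components $\Sigma_j$ of $S$, and whose edge inclusions come from the two boundary inclusions paired by $\tau$. Let $\mathcal T$ denote the corresponding Bass--Serre tree. A preliminary observation is that inside each $\pi_1(N_i)$, acylindricity implies that each boundary subgroup is malnormal and that distinct boundary subgroups have trivial intersection after conjugation; otherwise a nontrivial common element would yield a free homotopy between essential closed curves in two boundary components, hence an essential annulus in $N$. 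Now suppose $h\in\pi_*(\pi_1(\Sigma))\setminus\{1\}$ and $ghg^{-1}\in\pi_*(\pi_1(\Sigma))$ for some $g\in\pi_1(M)$ with $g\notin\pi_*(\pi_1(\Sigma))$. Then $h$ stabilizes both the edge $e\subset\mathcal T$ fixed by $\pi_1(\Sigma)$ and $g^{-1}e$, hence the entire nontrivial geodesic between them. At any intermediate vertex of this geodesic, $h$ lies in the intersection of two distinct edge stabilizers inside the vertex group, contradicting the preliminary observation. The argument for $\pi_*(\pi_1(U))$ is identical, with $e$ replaced by the vertex fixed by $\pi_1(U)$. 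The main obstacle I anticipate is the case analysis for acylindricity of $M$, since one must carefully track how boundary-parallel annuli in $N$ reassemble under $\tau$ and whether they produce a boundary-parallel annulus in $M$ or not.
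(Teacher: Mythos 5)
The paper does not actually write out a proof of this lemma: it asserts that the statement follows either from standard innermost arguments (citing Myers, Section 2) or from Bass--Serre theory for amalgams along malnormal subgroups, and leaves the details to the reader. Your proposal executes exactly those two suggested strategies --- innermost circles and disks for incompressibility, irreducibility, atoroidality and acylindricity, and the Bass--Serre tree for malnormality --- so it takes essentially the same approach; the case analysis you flag as the main obstacle, together with the possibility that the annulus in your ``preliminary observation'' is boundary-parallel rather than essential (which must be ruled out using the $\pi_1$-injectivity of $S$ in $\partial N$ and the absence of tori in $\partial M$), are precisely the details the paper also omits.
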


Lemma \ref{gluing} follows either easily from standard innermost arguments (see \cite[Section 2]{Myers}) or from well-known results from Bass--Serre theory on amalgamating groups along a common malnormal subgroup. We leave the details to the reader.

%asserts that they are totally determined by their fundamental groups.
%
%\begin{sat*}[Johannson]
%Suppose that $M$ and $M'$ are compact $3$-manifolds with possibly non-empty boundary. If $M$ is excellent, then any homotopy equivalence $M\to M'$ is homotopic to a homeomorphism. 
%\end{sat*}
%
%We recall at this point a result due to Waldhausen \cite{Waldhausen} on the relation between homotopy equivalences and homeomorphisms.
%
%\begin{sat*}[Waldhausen]
%Suppose that $M$ and $M'$ are compact irreducible $3$-manifolds with possibly non-empty boundary. Every homotopy equivalence $M\to M'$ whose restriction to $\D M$ is a homeomorphism onto $\D M'$ is homotopic relative to $\D M$ to a homeomorphism.
%\end{sat*}
%
%Yet another result of Waldhausen \cite{Waldhausen}
%
%\begin{named}{Waldhausen's cobordism theorem}
%Let $M$ be an irreducible $3$-manifold and $S,S'$ closed, disjoint, $\pi_1$-injective, two sided embedded subsurfaces. If $S$ and $S'$ are homotopic, then there is a compact submanifold $U\subset M$ with $\D U=S\cup S'$ and $U\simeq S\times[0,1]$.
%\end{named}

\begin{lem}\label{lem1}
Let $M$ be a complete open hyperbolic $3$-manifold, $K$ a truly excellent compact $3$-manifold, and $\iota:K\to M$ a homotopy equivalence. If the restriction of $\iota$ to $\D K$ is an embedding, then $\iota$ is homotopic relative to $\D K$ to an embedding $\iota':K\to M$.
\end{lem}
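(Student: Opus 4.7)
The plan is to realize $\iota$ as a map into a truly excellent compact submanifold of $M$ whose boundary is exactly $\iota(\partial K)$, and then to apply Waldhausen's rigidity theorem for homotopy equivalences of Haken 3-manifolds that restrict to homeomorphisms on the boundary.

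First I would invoke Scott's compact core theorem to produce a compact submanifold $C\subset M$ such that the inclusion $C\hookrightarrow M$ is a homotopy equivalence. Enlarging $C$ if necessary, I may assume $\iota(K)\subset\mathrm{int}(C)$. Since $C\simeq M\simeq K$, the Remark above implies that $C$ is truly excellent and in fact homeomorphic to $K$ by Johannson's theorem. After a further homotopy of $\iota$ relative to $\partial K$, I may also arrange that $\iota$ is an embedding on some collar neighborhood of $\partial K$ in $K$.

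The next step is to compare the embedded surface $\Sigma:=\iota(\partial K)$ with $\partial C$ inside $C$. Since $K$ is truly excellent, $\partial K$ is incompressible in $K$, and since $\iota_*$ is an isomorphism, $\Sigma$ is an embedded incompressible surface in $C$. Viewing $\iota$ as a homotopy equivalence $K\to C$, Waldhausen's theorem furnishes a homeomorphism $f:K\to C$ homotopic to $\iota$; this produces a bijection $\phi:\pi_0(\partial K)\to\pi_0(\partial C)$ together with a homotopy in $C$ from each $\Sigma_i:=\iota(\partial_i K)$ to the corresponding boundary component $\phi(\partial_i K)\subset\partial C$. Waldhausen's parallel-surface theorem, combined with a standard innermost argument to treat several components at once, then supplies pairwise disjoint product regions $P_i\cong\Sigma_i\times[0,1]$ in $C$ cobounded by $\Sigma_i$ and $\phi(\partial_i K)$. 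A transverse-orientation analysis along $\partial K$, matching the collar embedding of $\iota$ arranged in the previous step with the homeomorphism $f$ via the homotopy between them, shows that each $P_i$ lies on the side of $\Sigma_i$ opposite $\iota(\mathrm{int}(K))$.

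Setting $C':=C\setminus\bigsqcup_i\mathrm{int}(P_i)$ then produces a compact submanifold of $M$ homeomorphic to $C$, hence still truly excellent and homeomorphic to $K$, with $\partial C'=\Sigma$ and $\iota(K)\subset C'$. Thus $\iota:K\to C'$ is a homotopy equivalence of truly excellent compact $3$-manifolds that restricts to a homeomorphism $\iota|_{\partial K}:\partial K\to\partial C'$, and Waldhausen's rigidity theorem provides a homotopy from $\iota$ to a homeomorphism $\iota':K\to C'\subset M$, relative to $\partial K$; this $\iota'$ is the desired embedding. The technical heart of the argument is the side-selection in the previous step: one must verify that the regions $P_i$ lie opposite $\iota(\mathrm{int}(K))$, so that shrinking $C$ to $C'$ does not remove part of the image of $\iota$. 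This is the reason for the preliminary collar-embedding adjustment of $\iota$, and it is the step where one has to be genuinely careful with local orientations along $\partial K$.
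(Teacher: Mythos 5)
Your proposal follows essentially the same route as the paper: compactify/core the manifold, use Johannson--Waldhausen to get a homeomorphism $K\to C$ homotopic to $\iota$, apply Waldhausen's product theorem to obtain disjoint product regions between $\iota(\D K)$ and $\D C$, excise them, and finish with Waldhausen's rigidity rel boundary. (The paper uses the tameness theorem to get the compact manifold $\bar M$ rather than a Scott core; that also cleanly justifies your step ``enlarge $C$ so that $\iota(K)\subset\mathrm{int}(C)$,'' which is not automatic from Scott's theorem alone.)

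One assertion is wrong as stated: after excising the product regions you claim $\iota(K)\subset C'$. Since $\iota$ is only a homotopy equivalence, its image can perfectly well enter the regions $P_i$, and no collar adjustment or transverse-orientation analysis along $\D K$ controls where $\iota(\mathrm{int}(K))$ goes; indeed $\iota(\mathrm{int}(K))$ need not lie on one side of $\Sigma_i$ at all. The fix is standard and is what the paper implicitly does: $C$ deformation retracts onto $C'$ by collapsing each $P_i\cong\Sigma_i\times[0,1]$ onto $\Sigma_i$, fixing $C'$ (hence $\Sigma=\iota(\D K)$) pointwise, so composing $\iota$ with this retraction yields a map homotopic to $\iota$ rel $\D K$ with image in $C'$, to which Waldhausen's rel-boundary rigidity applies. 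With that substitution the ``side-selection'' issue you single out disappears; the genuinely necessary verification is rather that the $P_i$ are pairwise disjoint and that no $\Sigma_j$ lies inside $P_i$, which the paper handles by noting that a closed incompressible surface in a product is boundary-parallel, so otherwise two boundary components of $K$ would be homotopic, contradicting that $K$ is truly excellent.
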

\begin{proof}
%Since every manifold is the geometric limit of a tower of covers with finitely generated fundamental group, we may assume without loss of generality that
Since $K$ is compact and $M$ is homotopy equivalent to $K$, $\pi_1(M)$ is finitely generated. %\marginpar{\tiny M: Doesn't the h.e.~with a compact manifold give f.g.? We would need the geometric limit reduction if we only assumed $\iota$ is $\pi_1$-injective, but I think we're safe.\\ J: You are totally right. I got confused because at some point there was somewhere a geometric limit argument and I thought that we needed it... sorry.}
In particular, $M$ is homeomorphic to the interior of a compact manifold $\bar M$ \cite{Agol,CG}, and we can assume that $\iota(K)$ is contained in the interior of $\bar M$. The homotopy equivalence $\iota:K\to\bar M$ is homotopic to a homeomorphism $\tau:K\to\bar M$ \cite{Johannson}. Now, let $S\subset\D K$ be a boundary component of $K$. Since $\iota(S)$ and $\tau(S)$ are disjoint, $\pi_1$-injective, and homotopic, it follows from Waldhausen's cobordism theorem \cite{Waldhausen} that there is $U_S\subset\bar M$ homeomorphic to $S\times[0,1]$ with $\D U_S=\iota(S)\cup\tau(S)$.

Notice that if $S'\subset\D K$ is another boundary component of $K$ then we have $\iota(S')\cap\D U_S=\emptyset$, meaning that either $\iota(S')\subset U_S$ or $\iota(S')\cap U_S=\emptyset$. We rule out the former possibility: If $\iota(S')\subset U_S$ then it is a closed embedded $\pi_1$-injective surface in the trivial interval bundle $U_S$ and hence is isotopic to the boundary components of $U_S$. Since $\iota$ is a homotopy equivalence, this implies that $S$ and $S'$ are homotopic in $K$, but this contradicts the assumption that $K$ is truly excellent. This proves that $\iota(S')\cap U_S=\emptyset$ for all $S'\subset\D K\setminus S$. Notice that the same argument shows that $U_S\cap U_{S'}=\emptyset$ for all distinct boundary components $S,S'\subset\D K$ of $K$. %\marginpar{\tiny M: Added 'can homotope $U_S$ such that' and `we may assume that', since God might hand us sets that intersect in compressible garbage.\\ J: I disagree and this is why I put again what was there before. The point is that if you have a codimension 0 compact sub manifold $U$ then a set which does not intersect $\D U$ is either contained in $U$ or its complement. Here $S'$ does not intersect $\D U_S$ because it obviously does not intersect the component at infinity and because by assumption is does not intersect $S\subset\D K$.}

Finally, let $\hat M$ be the submanifold of %$M$ obtained by removing the interior of $U_S$
$\bar M$ obtained by removing $U_S \setminus \iota(S)$ for all $S\subset\D K$, and notice that the homotopy equivalence $\iota:K\to M$ is homotopic relative to $\D K$ to a homotopy equivalence $\hat\iota:K\to\hat M$ whose restriction to $\D K$ is a homeomorphism onto $\D\hat M$. The homotopy equivalence $\hat\iota:K\to\hat M$ is homotopic relative to $\D K$ to a homeomorphism $\iota':K\to\hat M\subset M$ \cite{Waldhausen}. This is our desired embedding.
\end{proof}

\begin{lem}\label{lem2}
Let $M$ and $M'$ be hyperbolic $3$-manifolds, $\pi:M\to M'$ be a covering, $K\subset M$ a compact core, and assume that $\pi_*(\pi_1(K))$ is malnormal in $\pi_1(M')$. If the restriction of $\pi$ to $\D K$ is an embedding, then so is the restriction of $\pi$ to $K$.
\end{lem}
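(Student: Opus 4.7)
The plan is to argue by contradiction in the universal cover. Write $G=\pi_1(M')$ and $H=\pi_*(\pi_1(K))\leq G$, so that the covering $\pi:M\to M'$ is identified with the projection $\BH^3/H\to\BH^3/G$; let $p_H:\BH^3\to M$ and $p_G:\BH^3\to M'$ denote the two universal covers. Let $\hat K=p_H^{-1}(K)\subset\BH^3$ be the $H$-invariant universal cover of $K$, realised as a connected $3$-submanifold whose boundary $\D\hat K$ is a disjoint union of properly embedded planes (universal covers of the components of $\D K$). A routine unwinding of definitions shows that $\pi|_K$ fails to be injective precisely when $g\hat K\cap\hat K\neq\emptyset$ for some $g\in G\setminus H$; I will assume such a $g$ exists and aim for a contradiction.

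The main step is to analyze the surface $\pi^{-1}(\pi(\D K))\subset M$. Since $\pi|_{\D K}$ is an embedding, $\pi(\D K)$ is an embedded $2$-manifold in $M'$, and for each component $\Sigma$ of $\D K$ with $S=\pi_1(\Sigma)$, standard covering theory puts the components of $\pi^{-1}(\pi(\Sigma))$ in bijection with the double cosets $H\backslash G/S$; the component associated to $HgS$ has fundamental group (conjugate in $G$ to) $g^{-1}Hg\cap S$. The coset $HS=H$ recovers $\Sigma$ itself, while for $g\in G\setminus H$ malnormality yields $g^{-1}Hg\cap S\subseteq g^{-1}Hg\cap H=\{1\}$. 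Thus every ``extra'' component $\Sigma'$ of $\pi^{-1}(\pi(\D K))$ is a simply connected, properly embedded plane in $M$, disjoint from $\D K$. The topological boundary of $\Sigma'\cap K$ inside $\Sigma'$ is contained in $\Sigma'\cap\D K=\emptyset$, so $\Sigma'\cap K$ is clopen in the connected plane $\Sigma'$; since $\Sigma'$ is noncompact and $K$ is compact, $\Sigma'\cap K=\emptyset$.

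Now lift back to $\BH^3$. Each plane $P$ of $g\D\hat K$ (with $g\in G\setminus H$) is a component of $p_G^{-1}(\pi(\Sigma))$, and $p_H|_P$ maps $P$ homeomorphically onto one of these extra components $\Sigma'\subset M$; hence $\Sigma'\cap K=\emptyset$ forces $P\cap\hat K=\emptyset$, and varying $P$ gives $g\D\hat K\cap\hat K=\emptyset$. Applying the same argument with $g^{-1}$ in place of $g$ yields $\D\hat K\cap g\hat K=\emptyset$ as well. Since $g\hat K$ is connected and disjoint from $\D\hat K$, it must lie entirely inside $\mathrm{int}(\hat K)$ or entirely outside $\hat K$; the standing assumption $g\hat K\cap\hat K\neq\emptyset$ forces $g\hat K\subseteq\mathrm{int}(\hat K)$, and therefore $\D(g\hat K)=g\D\hat K\subseteq\hat K$. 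This contradicts $g\D\hat K\cap\hat K=\emptyset$, which is nonvacuous because $\D K\neq\emptyset$ and hence $\D\hat K\neq\emptyset$. The main obstacle is the malnormality step that pins down the extra components of $\pi^{-1}(\pi(\D K))$ as simply connected planes; once those are known to miss the compact core $K$, the boundary-embedding hypothesis feeds cleanly into the connectedness argument to finish.
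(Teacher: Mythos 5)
Your argument is correct and arrives at the same place as the paper by a somewhat different route. The paper works downstairs in $M$: it reduces to showing that $x\in\D K$ and $y\in K$ with $\pi(x)=\pi(y)$ coincide, takes the component $S'$ of $\pi^{-1}(\pi(S))$ through $y$, notes that $S'\subset K$ would be compact so that $S'\to\pi(S)$ is finite-to-one, produces essential curves $\gamma\subset S$ and $\gamma'\subset S'$ with $\pi(\gamma)=\pi(\gamma')$, and invokes the covering-theoretic reformulation of malnormality stated after the definition in Section 2 to force $\gamma=\gamma'$ and hence $S'=S$. You instead run the malnormality through the double-coset description of $\pi^{-1}(\pi(\Sigma))$: for $g\notin H$ the stabilizer $g^{-1}Hg\cap S$ is trivial, so the extra components have trivial $\pi_1$-image, hence are noncompact, hence (being properly embedded and disjoint from $\D K$) miss the compact set $K$ altogether; you then make explicit, via the connectedness of $g\hat K$ in $\BH^3$, the reduction that the paper leaves implicit in its opening sentence. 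The two proofs use malnormality in the same essential way, so the difference is one of packaging; your version has the virtue of making the final connectedness step visible. Two caveats. First, the extra components are genuinely planes only when $\D K$ is incompressible in $M$, which the lemma does not assume; in general they are the covers of $\pi(\Sigma)$ corresponding to $\Ker(\pi_1(\Sigma)\to\pi_1(M'))$. Your argument only uses that they are noncompact, connected and properly embedded, so nothing breaks, but you should not call them planes. Second, noncompactness requires the image of $\pi_1(\Sigma)$ in $\pi_1(M')$ to be infinite; this does hold (malnormality forces $H\neq 1$, and a boundary component of a compact core of an aspherical $3$-manifold with nontrivial fundamental group cannot have trivial $\pi_1$-image), and the paper makes the same silent assumption when it asserts the existence of curves in $S$ essential in $\pi_1(M)$, so this is a shared elision rather than a gap in your argument.
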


Recall that a compact core of a $3$-manifold $M$ is a compact submanifold $K$ such that the inclusion $K\hookrightarrow M$ is a homotopy equivalence, and observe that every manifold admitting a compact core has finitely generated fundamental group. We also note that in the statement of Lemma \ref{lem2} we do not assume that $\pi_1(M')$ is finitely generated.%\marginpar{\tiny J: this is new}

\begin{proof}
Notice that it suffices to prove that if $x\in\D K$ and $y\in K$ are two points with $\pi(x)=\pi(y)$, then $x=y$. Let $S\subset\D K$ be the connected component containing $x$, consider its image $\pi(S)$, and let $S'$ be the component of $\pi^{-1}(\pi(S))$ with $y\in S'$. Notice that $S'\subset K$. In particular, compactness of $K$ implies that $S'$ is compact and hence that the cover $\pi\vert_{S'}:S'\to S$ is finite-to-one. 

It follows that there are two curves $\gamma\subset S$ and $\gamma'\subset S'$ which are essential in $\pi_1(M)$ such that $\pi(\gamma)=\pi(\gamma')$. The condition that $\pi_*(\pi_1(K))$ is a malnormal subgroup of $\pi_1(M')$ implies that $\gamma=\gamma'$ and hence that $S'\cap S\neq\emptyset$. Since both $S$ and $S'$ are components of the preimage of the embedded surface $\pi(S)$, it follows that $S'=S$ and thus that $y\in S$. Injectivity of $\pi$ on $S$ implies that $\pi(x)=\pi(y)$, as we needed to prove.%\marginpar{\tiny J: I think that the "freely homotopic" version of the malnormal thing does not make this work. This is why I changed it back.}
\end{proof}

\section{}\label{sec-geom}
In this note we will be interested in $3$-manifolds obtained by gluing truly excellent manifolds along their boundaries. More concretely we consider open manifolds $M$ which admit a {\em nested exhaustion with truly excellent gaps}
$$M=\bigcup_{n\in\BN} K_n,\ \ K_0=\emptyset,$$
by which we mean that $K_n$ is contained in the interior of $K_{n+1}$ and that the closure of $K_{n+1}\setminus K_n$ is a truly excellent manifold for all $n$. We prove:

\begin{prop}\label{kor-metric}
Every open $3$-manifold which admits a nested exhaustion with truly excellent gaps 
$$M=\bigcup_{n\in\BN} K_n,\ \ K_0=\emptyset$$
such that there is an upper bound for the genus of the connected components of $\bigcup_n\D K_n$ is homeomorphic to a complete hyperbolic $3$-manifold.
\end{prop}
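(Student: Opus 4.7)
The plan is to put hyperbolic metrics with totally geodesic boundary on each $K_n$, realize the smaller pieces inside the larger ones as embedded compact cores of intermediate covers, and then extract a geometric limit using the bounded-genus hypothesis.

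First, I verify by induction that each $K_n$ is truly excellent. The base case $K_1$ is the first gap, truly excellent by hypothesis. For the inductive step, view $K_{n+1}$ as built from the disjoint union $K_n\sqcup\overline{K_{n+1}\setminus K_n}$ by identifying pairs of matching boundary components; both pieces are truly excellent, the gluing surfaces have negative Euler characteristic and are $\pi_1$-injective in each boundary, and $\D K_{n+1}$ contains no tori since it is built from boundary components of truly excellent manifolds. Lemma \ref{gluing} then yields that $K_{n+1}$ is truly excellent, that each inclusion $K_n\hookrightarrow K_{n+1}$ is $\pi_1$-injective, and, by iteration, that $\pi_1(K_m)$ is malnormal in $\pi_1(K_n)$ for all $m<n$. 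By the hyperbolization statement recorded just before Lemma \ref{gluing}, every $K_n$ admits a complete hyperbolic metric with totally geodesic boundary, giving a convex cocompact Kleinian group $\Gamma_n\cong\pi_1(K_n)$ with convex core homeomorphic to $K_n$.

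Next, I combine the two lemmas of the previous section to make these structures compatible. For $m<n$, the subgroup $\pi_1(K_m)\subset\Gamma_n$ defines a cover $N_{m,n}\to\BH^3/\Gamma_n$. Lemma \ref{lem1} applied to the homotopy equivalence $K_m\hookrightarrow\BH^3/\Gamma_n$ yields an embedding in the same homotopy class; Lemma \ref{lem2}, which is available thanks to malnormality, then lifts this embedding to one realizing $K_m$ as an embedded compact core of $N_{m,n}$. Fix a basepoint $p_0\in K_1$. We obtain, for each $m$, a sequence of pointed hyperbolic $3$-manifolds $(N_{m,n},p_0)$, $n\geq m$, each containing $K_m$ as a compact core with $p_0$ in its interior.

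Now I pass to the limit. The bounded-genus hypothesis ensures that every boundary component of the convex core of $N_{m,n}$ has hyperbolic area at most $2\pi(2g_0-2)$ where $g_0$ is the genus bound; together with Jørgensen--Thurston compactness applied inside these convex cores, where the Margulis decomposition controls the thin parts, this produces a geometric limit $(N_{m,\infty},p_0)$ along a subsequence. A diagonal argument using the nested structure $K_1\subset K_2\subset\cdots$ then yields a single pointed geometric limit $(M_\infty,p_0)$ together with compatible isometric embeddings of every $K_m$ that intertwine the inclusions $K_m\hookrightarrow K_{m+1}$. The union of these embeddings is an isometric embedding $M=\bigcup_m K_m\hookrightarrow M_\infty$; since $\pi_1(M_\infty)=\bigcup_m\pi_1(K_m)=\pi_1(M)$ and each $K_m$ is a compact core of its image, the embedding must be a homeomorphism, producing the desired complete hyperbolic metric on $M$.

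The main obstacle is the geometric convergence step. The bounded-genus condition controls the areas of the boundary components of the convex cores but does not immediately prevent thin tubes from accumulating near $p_0$; the key point is that $K_1$ is a fixed truly excellent manifold sitting inside every $K_n$, so the geometry on a neighborhood of $p_0$ in $N_{m,n}$ stays in a compact family and the Margulis thin parts that do appear are absorbed into the topology of the $K_n$. Making this last step precise is the crux of the argument.
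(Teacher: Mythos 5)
Your first step (inductive application of Lemma \ref{gluing} to get that each $K_n$ is truly excellent, $\pi_1$-injectivity, and malnormality) matches the paper. After that your route diverges, and the divergence is where the genuine gap lies. The paper does \emph{not} take geometric limits: it takes an \emph{algebraic} limit of discrete faithful representations $\rho_n:\pi_1(K_n)\to\PSL_2\BC$, using Thurston's compactness theorem for acylindrical manifolds to get convergence on each $\pi_1(K_k)$ after conjugation, and J{\o}rgensen--Marden to see the limit $\rho$ is discrete and faithful. This produces a hyperbolic $M'$ merely \emph{homotopy equivalent} to $M$ (Lemma \ref{hom-metric}), and then a separate, substantial argument (Lemma \ref{homeo-metric}) upgrades the homotopy equivalence to a homeomorphism. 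Your proposal tries to get the homeomorphism directly out of a geometric limit, and the step you yourself flag as ``the crux'' is precisely the step that fails as stated. First, for a geometric (Chabauty/pointed Gromov--Hausdorff) limit $M_\infty$ of the covers $N_{m,n}$, the identity $\pi_1(M_\infty)=\bigcup_m\pi_1(K_m)$ is not automatic: geometric limits can be strictly larger than algebraic limits (J{\o}rgensen's examples), and ruling this out requires an argument you do not supply. Second, geometric convergence yields $(1+\epsilon)$-bilipschitz embeddings of compact pieces of the limit into the approximates, not isometric embeddings of the fixed topological pieces $K_m$ into the limit, and the asserted compatibility of these embeddings with the inclusions $K_m\hookrightarrow K_{m+1}$ is exactly what needs proof. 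Third, even granting an open embedding $M\hookrightarrow M_\infty$ inducing a $\pi_1$-isomorphism, surjectivity/properness does not follow formally; this is the content of the paper's Lemma \ref{homeo-metric}.

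There is a second, more local gap: both Lemma \ref{lem1} and Lemma \ref{lem2} require as a hypothesis that the relevant map is already an \emph{embedding on the boundary} $\D K_m$, and you invoke them without arranging this. In the paper this is where the real work and the bounded-genus hypothesis enter: one replaces the boundary surfaces $\bigcup_n\D K_n$ by Schoen--Yau least area immersions, uses the curvature bound $\le -1$ plus the genus bound to get a uniform area bound and hence uniformly short essential loops, deduces properness of the map on $\bigcup_n\D K_n$ from the fact that essential curves on distinct boundary components are not freely homotopic in $M$, and then applies Freedman--Hass--Scott to get embeddedness. Your use of the genus bound (an area bound on convex core boundaries feeding into ``J{\o}rgensen--Thurston compactness'') is not the role it actually plays, and for fixed $m$ the compactness you need is already supplied by Thurston's acylindrical compactness theorem without any genus hypothesis. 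In short: the skeleton (hyperbolize the pieces, use Lemmas \ref{lem1} and \ref{lem2} plus malnormality to embed them) is the right one, but the passage to the limit and the verification of the boundary-embedding hypotheses are missing, and these are the heart of the proof.
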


Suppose throughout this section that $M$ and $K_n$ are as in the statement of Proposition \ref{kor-metric} and fix $*\in K_1$. Before launching into the proof of the proposition, observe that Lemma \ref{gluing} implies:
\begin{enumerate}
\item $K_n$ is truly excellent for all $n\ge 1$.
\item If $S\subset\D K_n$ is a connected component of the boundary of $K_n$, then $S$ is incompressible in $M$.
\item If $S\subset\D K_n$ and $S'\subset\D K_m$ are connected components of the boundary of $K_n$ and $K_m$ with $n,m\ge 1$ such that there are essential curves $\gamma\subset S$ and $\gamma'\subset S'$ which are freely homotopic in $M$ then $n=m$, $S=S'$ and $\gamma$ and $\gamma'$ are in fact freely homotopic within $S$.
\item $\pi_1(M,*)$ is the nested union of the subgroups $\pi_1(K_n,*)$.
\item $\pi_1(K_n,*)$ is malnormal in $\pi_1(M,*)$ for all $n\ge 1$.
\end{enumerate}
Also, notice that since $M$ is the nested union of aspherical manifolds, it is aspherical as well.

We divide the proof of Proposition \ref{kor-metric} into two separate statements:

\begin{lem}\label{hom-metric}
The manifold $M$ is homotopy equivalent to a complete hyperbolic $3$-manifold $M'$.
\end{lem}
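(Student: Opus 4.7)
My strategy is to construct $M'=\BH^3/\Gamma$ as the quotient of $\BH^3$ by a Kleinian group $\Gamma$ obtained as a direct limit $\Gamma=\bigcup_n\Gamma_n$ of geometrically finite Kleinian groups, with $\BH^3/\Gamma_n$ homotopy equivalent to $K_n$ for each $n$. The main ingredients will be Thurston's hyperbolization theorem, the Klein--Maskit combination theorem, and a geometric convergence argument controlled by the bound on the genus of the boundary components.

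The first step is the inductive construction of the groups $\Gamma_n$. Since $K_1$ is truly excellent, Thurston's theorem produces a convex cocompact Kleinian group $\Gamma_1$ with $\BH^3/\Gamma_1$ homotopy equivalent to $K_1$. For the inductive step, suppose $\Gamma_n$ has been constructed. The closure $W_{n+1}$ of $K_{n+1}\setminus K_n$ is truly excellent, so Thurston's theorem yields a convex cocompact Kleinian group $\Delta_{n+1}$ for $\pi_1(W_{n+1})$. By varying within the Teichm\"uller space $\CT(\D W_{n+1})$ that parametrises these structures, I would arrange that the restriction of $\Delta_{n+1}$ to $\pi_1(\D K_n)\subset\pi_1(W_{n+1})$ is $\PSL_2(\BC)$-conjugate to the restriction of $\Gamma_n$ to the same surface group $\pi_1(\D K_n)\subset\pi_1(K_n)$. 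Once these quasi-Fuchsian subgroups coincide, the Klein--Maskit combination theorem produces a convex cocompact Kleinian group $\Gamma_{n+1}\cong\Gamma_n*_{\pi_1(\D K_n)}\Delta_{n+1}$ realising the amalgamated product structure of $\pi_1(K_{n+1})$ coming from van Kampen; the malnormality hypothesis of the combination theorem is exactly what Lemma \ref{gluing} delivers. I would moreover choose the conformal structures at infinity so that, fixing a lift $\tilde *\in\BH^3$ of the basepoint $*\in K_1$, the distance from $*$ to the embedded copy of $\D K_n$ in $\BH^3/\Gamma_n$ tends to infinity with $n$.

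The main obstacle is to show that $\Gamma:=\bigcup_n\Gamma_n$ is a discrete subgroup of $\PSL_2(\BC)$. The choice in the previous paragraph forces any element of $\Gamma_{n+1}\setminus\Gamma_n$ to represent a loop based at $*$ that exits $K_n$ across $\D K_n$ and returns, hence has translation length at $\tilde *$ at least $2d(*,\D K_n)\to\infty$. Consequently the injectivity radius of $\BH^3/\Gamma_n$ at $*$ is bounded below by a positive constant independent of $n$. The bounded genus hypothesis enters twice: first, it ensures that the Teichm\"uller spaces $\CT(\D K_n)$ appearing in the matchings remain of uniformly bounded complexity, so that the Klein--Maskit matchings can be carried out consistently at every stage; second, together with the uniform injectivity radius bound, it supplies the Gromov--Cheeger precompactness needed to extract a geometric limit of the sequence $(\BH^3/\Gamma_n,*)$. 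A standard Margulis-lemma argument identifies this geometric limit with $\BH^3/\Gamma$, and in particular shows that $\Gamma$ is discrete.

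Finally, the identification of $M':=\BH^3/\Gamma$ as homotopy equivalent to $M$ follows formally. The coverings $\BH^3/\Gamma_n\to M'$, together with the malnormality of $\pi_1(K_n)$ in $\pi_1(M)=\Gamma$ and Lemma \ref{lem2}, yield embeddings $K_n\hookrightarrow M'$; one arranges these compatibly with the inclusions $K_n\hookrightarrow K_{n+1}$, producing a continuous map $M\to M'$ that induces an isomorphism on $\pi_1$ since every element of $\Gamma$ lies in some $\Gamma_n$. Because $M$ is aspherical (a nested union of aspherical manifolds) and $M'$ is aspherical (being a hyperbolic manifold with torsion-free fundamental group), Whitehead's theorem promotes this to a homotopy equivalence.
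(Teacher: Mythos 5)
Your overall architecture (build nested convex cocompact groups $\Gamma_n$ uniformizing $K_n$, show the union is discrete via a translation-length bound at a lifted basepoint, then identify the quotient up to homotopy) is a genuinely different route from the paper's. The paper never glues Kleinian groups at all: it takes, for each $n$, an arbitrary discrete faithful $\rho_n:\pi_1(K_n,*)\to\PSL_2\BC$, uses Thurston's compactness theorem for the deformation spaces of the acylindrical manifolds $K_k$ to normalize the restrictions $\rho_n\vert_{\pi_1(K_k,*)}$, extracts a diagonal limit $\rho$ defined on $\pi_1(M,*)=\bigcup_k\pi_1(K_k,*)$, and invokes J\o rgensen--Marden to conclude that $\rho$ is discrete and faithful. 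That detour exists precisely to avoid the step where your argument breaks.

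The gap is the matching step. Write $W_{n+1}$ for the closure of $K_{n+1}\setminus K_n$ and let $S\subset\D K_n$ be a gluing surface. The restriction of a convex cocompact structure on $W_{n+1}$ to $\pi_1(S)$ is a point of the quasi-Fuchsian space $QF(S)$, which Bers parametrizes by two copies of $\CT(S)$; but only one of the two coordinates is a conformal structure at infinity of $W_{n+1}$ that you are free to prescribe, while the other is the corresponding coordinate of the skinning map. So the set of boundary quasi-Fuchsian groups realizable by varying $\Delta_{n+1}$ over $\CT(\D W_{n+1})$ is the graph of the skinning map, a subset of roughly half the dimension of $\prod_{S}QF(S)$, and there is no reason it contains the specific subgroups $J_S\leq\Gamma_n$ that you froze at the previous stage --- let alone simultaneously for all components of $\D K_n$. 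Solving this matching problem with both sides allowed to vary is exactly the content of Thurston's fixed-point argument (via the Bounded Image Theorem) in the hyperbolization of Haken manifolds; it cannot be done one side at a time with one side fixed. A secondary unjustified assertion is that you can additionally force $d(*,\D K_n)\to\infty$: this competes with the matching constraint and is nowhere argued (it is also more than you need --- a uniform positive lower bound on $d(*,\D K_n)$ would already give discreteness of the union). To salvage your approach you would have to set the whole construction up as a simultaneous fixed-point or geometric-limit problem; alternatively, do what the paper does and hyperbolize each $K_n$ independently (each is truly excellent), replacing the requirement that the representations \emph{agree} on $\pi_1(K_k,*)$ by the requirement that they \emph{subconverge} there, which is what compactness of the acylindrical deformation space plus a diagonal argument delivers.
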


\begin{proof}
Since $K_n$ is excellent for all $n\ge 1$, there is a discrete and faithful representation 
$$\rho_n:\pi_1(K_n,*)\to\PSL_2\BC=\Isom_+(\BH^3).$$
See \cite{Kapovich,Ota98}. Fixing $k$, for each for $n\ge k$ we can restrict the representation $\rho_n$ to the subgroup $\pi_1(K_k,*)$ of $\pi_1(K_n,*)$. Since each of the manifolds $K_k$ is excellent, it follows from Thurston's compactness theorem \cite{Thurston} that there is a sequence $(g_n)\subset\PSL_2\BC$ such that for all $\gamma\in\pi_1(K_k,*)$ the sequence $(g_n\rho_n(\gamma)g_n^{-1})$ is relatively compact in $\PSL_2\BC$. In particular, conjugating our representations and passing to a diagonal subsequence we can assume that the limit 
$$\rho(\gamma)=\lim_{n\to\infty,\ n\ge k}\rho_n(\gamma)$$ 
exists for all $\gamma\in\pi_1(K_k,*)$ and for all $k$. Since $\pi_1(M,*)=\bigcup_k\pi_1(K_k,*)$ we therefore obtain a representation 
$$\rho:\pi_1(M)\to\PSL_2\BC.$$
It is discrete and faithful by work of J\o rgensen \cite{jorgmarden}. In particular, $M'=\BH^3/\rho(\pi_1(M,*))$ is a hyperbolic $3$-manifold with $\pi_1(M')\simeq\pi_1(M)$. Since both $M$ and $M'$ are aspherical, it follows that they are homotopy equivalent.
\end{proof}

We now prove that $M$ and the manifold $M'$ provided by Lemma \ref{hom-metric} are not only homotopy equivalent but actually homeomorphic:

\begin{lem}\label{homeo-metric}
$M$ and $M'$ are homeomorphic.
\end{lem}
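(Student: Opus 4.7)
The plan is to build inductively a nested sequence of embeddings $\iota_n : K_n \hookrightarrow M'$ with $\iota_n(K_n) \subset \mathrm{int}(\iota_{n+1}(K_{n+1}))$ and $\bigcup_n \iota_n(K_n) = M'$; these will assemble into a homeomorphism $M = \bigcup_n K_n \to M'$. Fix $n$ and let $p_n : M'_n \to M'$ be the covering corresponding to $\rho_*(\pi_1(K_n)) \subset \pi_1(M')$, where $\rho$ is the discrete faithful representation from Lemma~\ref{hom-metric}. Since $\pi_1(M'_n) \cong \pi_1(K_n)$ is finitely generated, tameness \cite{Agol,CG} identifies $M'_n$ with the interior of a compact Haken $3$-manifold $\bar M'_n$; Johannson's theorem \cite{Johannson} then produces a homeomorphism $K_n \to \bar M'_n$, which after pushing the boundary slightly inward gives an embedding $\tilde\iota_n : K_n \hookrightarrow M'_n$ as a compact core.

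Setting $\iota_n := p_n \circ \tilde\iota_n$, I aim to apply Lemma~\ref{lem2} to conclude that $\iota_n$ is an embedding. Malnormality of $\rho_*(\pi_1(K_n))$ in $\pi_1(M')$ is property~(5) after Proposition~\ref{kor-metric}, so the work is to check that $p_n$ is injective on $\tilde\iota_n(\partial K_n)$. Acylindricity of the truly excellent manifold $K_n$ makes each peripheral subgroup $\pi_1(\Sigma)$ (for $\Sigma \subset \partial K_n$) malnormal in $\pi_1(K_n)$ and forces distinct peripheral subgroups to have trivial conjugate intersection in $\pi_1(K_n)$; combined with malnormality of $\pi_1(K_n)$ in $\pi_1(M')$, both properties extend to $\pi_1(M')$. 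Realizing the components of $\tilde\iota_n(\partial K_n)$ by pleated (or least-area) surfaces, any intersection of their lifts in $\BH^3$ would be stabilized by a nontrivial intersection of conjugates of these peripheral subgroups, contradicting this extended almost-malnormality. Hence the boundary embeds, Lemma~\ref{lem2} applies, and $\iota_n$ is an embedding.

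Compatibility $\iota_n(K_n) \subset \mathrm{int}(\iota_{n+1}(K_{n+1}))$ with matching restrictions is arranged inside $M'_{n+1}$: the lift $\iota_n^\sharp : K_n \hookrightarrow M'_{n+1}$ of $\iota_n$ and the restriction $\tilde\iota_{n+1}|_{K_n}$ are homotopic $\pi_1$-injective embeddings into the Haken manifold $M'_{n+1}$, hence isotopic by a Waldhausen argument of the type used in the proof of Lemma~\ref{lem1}, after which I adjust $\tilde\iota_{n+1}$ by the isotopy. For exhaustion, any compact $C \subset M'$ has finitely generated fundamental group whose image in $\pi_1(M') = \bigcup_n \rho_*(\pi_1(K_n))$ lies in $\rho_*(\pi_1(K_n))$ for some $n$; lifting $C \hookrightarrow M'$ to $M'_n$, compactness plus the tame product structure of $M'_n \setminus \tilde\iota_n(K_n)$ places the lift in a bounded neighborhood of $\tilde\iota_n(K_n)$, which is engulfed by $\tilde\iota_{n+m}(K_{n+m})$ for $m$ large by the matching end-structure in the exhaustion $M = \bigcup_k K_k$. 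The main difficulty in this plan is Step~2: the group-theoretic reduction to almost-malnormality is clean, but the geometric argument for disjointness of the lifted boundary surfaces in $\BH^3$ requires the careful choice of geometric representative described above.
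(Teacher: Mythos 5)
Your overall strategy (inductively nesting compact cores $\iota_n(K_n)$ in $M'$ via the covers $M'_n$, tameness, and Johannson) is a reasonable reorganization of the paper's argument, but it has two genuine gaps. The first is the step you yourself flag: malnormality (or almost-malnormality of the peripheral system) does \emph{not} imply that least-area or pleated representatives of the boundary components have disjoint translates. Two lifts of such surfaces can intersect transversally in $\BH^3$ along a $1$-manifold whose stabilizer is trivial, so ``any intersection of their lifts would be stabilized by a nontrivial intersection of conjugates'' is false; a nonempty intersection with trivial stabilizer simply projects to a compact family of double curves. The correct tool is the Freedman--Hass--Scott exchange-and-roundoff argument, which requires knowing in advance that the surfaces are homotopic to disjoint \emph{embedded} surfaces in the relevant cover; the paper obtains exactly this from Johannson's homeomorphism $K_N\to\bar M_N$ before invoking \cite{FHS}. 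Group theory alone, plus a ``careful choice of geometric representative,'' does not close this step, and pleated surfaces in particular are essentially never embedded.

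The second gap is the exhaustion $\bigcup_n\iota_n(K_n)=M'$. Your argument places a lift of a compact set $C\subset M'$ in a bounded neighborhood of the core of $M'_n$, but the claim that this is then ``engulfed by $\tilde\iota_{n+m}(K_{n+m})$ by the matching end-structure'' is precisely the point at issue: the images $\iota_n(K_n)$ could a priori accumulate inside $M'$, leaving an open set uncovered. Notice that your proposal never uses the hypothesis that the genus of the components of $\bigcup_n\D K_n$ is bounded; that hypothesis is exactly what the paper uses (curvature $\le -1$ for minimal surfaces, hence uniformly bounded area, hence short essential loops, hence only finitely many surfaces meet any compact set) to prove that $\phi\vert_{\CS}$ is \emph{proper}, which is what forces the nested cores to exhaust $M'$. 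Any proof that does not invoke this bound somewhere should be treated with suspicion. A smaller but real issue: after proving that $p_{n+1}\circ\tilde\iota_{n+1}$ is an embedding, you isotope $\tilde\iota_{n+1}$ inside $M'_{n+1}$ to match $\iota_n$; an isotopy upstairs need not preserve injectivity of the composition with the covering $p_{n+1}$, so the adjusted map may no longer embed in $M'$. The paper avoids this by first embedding all the boundary surfaces at once and only then filling in each gap $U_n$ rel $\D U_n$ using Lemmas \ref{lem1} and \ref{lem2}.
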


\begin{proof}
Choose a homotopy equivalence $\phi:M\to M'$ and set 
$$\CS=\bigcup_n\D K_n\subset M$$
Every connected component $S$ of $\CS$ is $\pi_1$-injective; it follows that $\phi(S)$ is homotopic to an immersed least area surface in $M'$ \cite{SY}. We can thus assume, up to replacing $\phi$ by a homotopic map, that the restriction of $\phi$ to $S$ is a minimal immersion for all components $S$ of $\CS$.

We now claim that the restriction of $\phi$ to $\CS$ is proper. In fact, if that were not the case then there would be a sequence $(S_k)$ of distinct components of $\CS$ and a sequence of points $p_k\in S_k$ such that $\phi(p_k)$ has a limit in $M'$. Endow $S_k$ with the pulled-back Riemannian metric and notice that, since $\phi\vert_{S_k}$ is a minimal immersion, this metric has curvature bounded from above by $-1$. Since, by assumption, $S_k$ has genus uniformly bounded from above, there is $C$ such that $S_k$ has at most area $C$ for all $k$. Therefore, there is some constant $D>0$ such that for all $k$ there is a homotopically essential loop $\gamma_k\subset S_k$ based at $p_k$ whose image $\phi(\gamma_k)$ has at most length $D$. Since the points $\phi(p_k)$ converge in $M'$, it follows that the loops $\phi(\gamma_k)$ belong to finitely many free homotopy classes in $M'$. In other words, there are $l\neq k$ such that $\phi(\gamma_k)$ and $\phi(\gamma_l)$ represent the same conjugacy class in $\pi_1(M')$. Since $\phi$ is a homotopy equivalence, it follows that $\gamma_k$ and $\gamma_l$ also represent the same conjugacy class in $\pi_1(M)$; as we noted above this is not possible. This shows that the restriction of $\phi$ to $\CS$ is proper.

We next prove that the restriction of $\phi$ to $\CS$ is an embedding. Properness implies that it suffices to show that the restriction of $\phi$ to $\CS_n=\bigcup_{i\le n} \D K_i$ is an embedding for all $n$. Notice that there is some $N\ge n$ such that a neighborhood of the set  $\phi(K_n)$ lifts homeomorphically under the cover $\pi:\BH^3/\rho(\pi_1(K_N))\to M'$. Now consider the commutative diagram
$$\xymatrix{& \BH^3/\rho(\pi_1(K_N)) \ar[d]^{\pi}\\
K_N\ar[ru]^{\tilde\phi}\ar[r]_{\phi} & M'}$$
The manifold $\BH^3/\rho(\pi_1(K_N))$ is homeomorphic to the interior of a compact manifold $\bar M_N$ by the tameness theorem \cite{Agol,CG}. Moreover, since $K_N$ is excellent it follows from \cite{Johannson} that the map $\tilde\phi:K_N\to\bar M_N$ is homotopic to a homeomorphism. In particular, this implies that $\tilde\phi(\CS_n)$ is homotopic to an embedded surface. Since $\tilde\phi(\CS_n)$ is a $\pi_1$-injective least area surface and since two curves in $\CS_n$ which are homotopic in $M$ are also homotopic within $\CS_n$, it follows from \cite{FHS} that the restriction of $\tilde\phi$ to $\CS_n$ is an embedding. Since the restriction of $\pi$ to $\tilde\phi(K_n)$ is a homeomorphism onto $\phi(K_n)$, it follows that the restriction of $\phi$ to $\CS_n$ is also an embedding. This proves that $\phi$ maps $\CS$ homeomorphically onto its image.

At this point we are ready to finish the proof. It suffices to prove that for every natural number $n$, the restriction of $\phi$ to the closure $U_n$ of $K_n\setminus K_{n-1}$ is homotopic rel $\D U_n\subset\CS$ to an embedding. Consider the cover $\pi':\BH^3/\rho(\pi_1(U_n))\to M'$ and observe that we have a diagram as follows:
$$\xymatrix{
& \BH^3/\rho(\pi_1(U_n)) \ar[d]^{\pi'}\\
U_n\ar[ru]^{\tilde\phi}\ar[r]_{\phi} & M'}$$
Since the restriction of $\phi$ to $\D U_n\subset\CS$ is an embedding, we deduce that the restriction of $\tilde\phi$ to each $\D U_n$ is also an embedding. It follows from Lemma \ref{lem1} that $\tilde\phi$ is homotopic relative to $\D U_n$ to an embedding $\tilde\psi:U_n\to\BH^3/\rho(\pi_1(U_n))$. Since $\tilde\psi(U_n)$ is a compact core of $\BH^3/\rho(\pi_1(U_n))$, Lemma \ref{lem2} implies that that $\pi\circ\tilde\psi$ is also an embedding. 
\end{proof}

Proposition \ref{kor-metric} follows directly from Lemma \ref{hom-metric} and Lemma \ref{homeo-metric}.\qed

\section{}\label{sec-cantor}
In this section we prove Theorem \ref{sat1}. The basic idea is to modify the construction of Antoine's necklace by replacing each link of the necklace by a graph. A (piecewise linearly) embedded finite graph $X$ in a manifold $M$ with possibly non-empty boundary is {\em properly embedded} if $X\cap\D M$ is precisely equal to the set of vertices of $X$ with valence $1$. If $X\subset M$ is any such properly embedded graph then we denote by $\CN(X)$ an open regular neighborhood of $X$. A properly embedded graph $X\subset M$ is {\em truly excellent} if $M\setminus\CN(X)$ is truly excellent.

In the proof of the Theorem \ref{sat1} we will make heavy use of the following result which is basically due to Myers \cite{Myers} (see also \cite{Kent-Souto}):

\begin{sat*}[Myers]
Let $M$ be an oriented $3$-manifold and $X\subset M$ a properly embedded finite graph such that every component of $\D(M\setminus\CN(X))$ has negative Euler characteristic. Then $X$ is homotopic, relative to $\D M$, to a truly excellent properly embedded graph.
\end{sat*}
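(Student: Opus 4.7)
The plan is to reduce to Myers' theorem for a single arc in a ball, and then reassemble the result by iterated application of Lemma \ref{gluing}. First I would fix a triangulation $\mathcal{T}$ of $M$ sufficiently fine that, after subdividing the edges of $X$ by inserting bivalent vertices, each edge $e$ of the refined graph is contained in the interior of a single closed $3$-simplex $\Delta_e$ of $\mathcal{T}$, meeting $\D\Delta_e$ exactly at its endpoints; around each vertex $v$ of $X$ I would also take a small regular-neighborhood ball $B_v$ in which the link of $v$ in $X$ is a standard cone. This decomposes $M$ into pieces $\{\Delta_e\}$ and $\{B_v\}$ glued along planar surfaces, and reduces the problem to the local one of replacing, inside each $\Delta_e$, the arc $e$ by a new arc $e'$ (homotopic to $e$ rel endpoints) whose complement in $\Delta_e$ is truly excellent.

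Second, for each edge I would invoke the original arc version of Myers' theorem: any properly embedded arc in a $3$-ball is isotopic rel boundary to an arc whose exterior is irreducible, $\partial$-irreducible, atoroidal, and anannular, with boundary a twice-punctured $2$-sphere. This is the technical heart of \cite{Myers}: one produces an explicit ``Myers tangle'' and rules out essential spheres, disks, annuli, and tori in its complement by an innermost-disk / standard-position analysis. Inside the vertex balls $B_v$ the situation is the same once each local cone is replaced by a properly embedded tree with sufficiently knotted edges, since such trees can again be chosen excellent by the same local construction.

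Third, I would glue. After performing the local modification in every $\Delta_e$ and $B_v$, the exterior $M\setminus\CN(X')$ is obtained by gluing these truly excellent pieces along planar gluing surfaces that are subsets of the $2$-skeleton of $\mathcal{T}$. Each such gluing surface is $\pi_1$-injective in the boundary of each adjacent piece (by the $\partial$-irreducibility contained in truly excellent) and, provided $\mathcal{T}$ is fine enough that every face is punctured by enough strands of $X'$, it has negative Euler characteristic; by hypothesis no torus appears in $\D(M\setminus\CN(X'))$, since $\D(M\setminus\CN(X'))$ is homeomorphic to $\D(M\setminus\CN(X))$ and all its components have negative Euler characteristic. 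Applying Lemma \ref{gluing} inductively, one piece at a time, shows that each intermediate gluing is truly excellent, hence so is the complement of the final graph $X'$.

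The main obstacle is the second step: producing in a prescribed $3$-ball an arc whose exterior is truly excellent. All the other steps (triangulating, subdividing, verifying $\pi_1$-injectivity and the Euler-characteristic hypothesis on the gluing surfaces, and iterating Lemma \ref{gluing}) are essentially bookkeeping once the local arc theorem is available. I would therefore treat Myers' arc construction as a black box from \cite{Myers} and focus on the reduction and the gluing, which is exactly the point of view adopted in the proof below.
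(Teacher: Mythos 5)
A preliminary remark: the paper does not prove this statement at all --- it is quoted as a black box and attributed to \cite{Myers} (see also \cite{Kent-Souto}), so there is no ``paper's own proof'' to compare against. Your overall architecture (cut $M$ into balls along a fine triangulation, replace the intersection of the graph with each ball by an excellent tangle, reassemble with Lemma \ref{gluing}) is in fact the strategy of Myers' original argument, and it is also the strategy the authors use in Section \ref{sec-cantor} when they build the links $L_U$. But as written your reduction fails the hypotheses of Lemma \ref{gluing} at exactly the point where the work is. If a tetrahedron $\Delta_e$ contains a single strand of $X'$, then $\D\bigl(\Delta_e\setminus\CN(X')\bigr)$ is a torus, so that piece can never be truly excellent --- your own description of the local model, ``boundary a twice-punctured $2$-sphere,'' already has Euler characteristic $0$ and contradicts the later claim that you are ``gluing truly excellent pieces.'' Likewise a face of the triangulation punctured once by $X'$ contributes a once-punctured disk (an annulus) as a gluing surface, which violates both the negative-Euler-characteristic and the $\pi_1$-injectivity requirements of Lemma \ref{gluing}; a face punctured zero times contributes a disk, which is worse. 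And the proposed remedy, ``provided $\mathcal{T}$ is fine enough that every face is punctured by enough strands,'' goes in the wrong direction: refining the triangulation \emph{decreases} the number of strands through each face.

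The missing idea is a preliminary normalization: before doing any local replacement, homotope $X$ (rel $\D M$) by finger moves so that every ball of the decomposition contains at least two strands and every gluing face is punctured at least twice --- each finger move pushes an arc through a face and raises the intersection number by $2$ without changing the homotopy class, so this costs nothing. Only then are the local exteriors candidates for being truly excellent and the gluing surfaces of negative Euler characteristic, and only then can Lemma \ref{gluing} be applied inductively. This is precisely what the authors arrange in their construction of $L_U$: each gluing disk $D$ receives three punctures $p^D_1,p^D_2,p^D_3$, so that $D\setminus\{p^D_1,p^D_2,p^D_3\}$ has $\chi=-2$, and each piece $E\in\CB$ contains a union of tripods $T(E)$ which Myers' theorem lets one take to be truly excellent in $E$. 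With that normalization in place, the rest of your outline (local excellence as a black box, iterated application of Lemma \ref{gluing}, and the observation that $\D(M\setminus\CN(X'))\cong\D(M\setminus\CN(X))$ so no boundary tori appear) is sound.
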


After these remarks we are ready to prove Theorem \ref{sat1}:

\begin{named}{Theorem \ref{sat1}}
There is a Cantor set $C\subset\BS^3$ whose complement $\BS^3\setminus C$ admits a complete hyperbolic metric.
\end{named}
\begin{proof}
We will construct a sequence $V_0,V_1,V_2,\dots$ of compact $3$-manifolds in $\BS^3$ starting with $V_0=\BS^3$ and satisfying the following conditions for all $n\ge 1$:
\begin{enumerate}
\item Each $V_n$ is contained in the interior of $V_{n-1}$ and the closure of $V_{n-1}\setminus V_n$ is truly excellent.
\item Each component of $\D V_n$ has genus $2$.
\item Each component of $V_{n-1}$ contains at least two components of $V_n$.
\item For any sequence of components $U_n\subset V_n$, $\lim_{n\to\infty}\diam(U_n)=0$, where $\diam(\cdot)$ is the diameter in the spherical metric on $\BS^3$.
\end{enumerate}
Assuming for a moment that such a sequence exists, we conclude the proof of Theorem \ref{sat1}. The set $C=\bigcap_{n=1}^\infty V_n$ is an intersection of compact sets and hence compact. Moreover, (3) implies that it is totally disconnected and (2) yields in turn that $C$ has no isolated points. In other words, $C$ is a Cantor set. Now let $K_n$ be the closure of $\BS^3\setminus V_n$ for $n\ge 1$ and notice that
$$K_1\subset K_2\subset K_3\subset\cdots\ \ \hbox{and}\ \ M=\BS^3\setminus C=\bigcup_n K_n.$$
Proposition \ref{kor-metric} applies by (1), which implies that $M=\BS^3\setminus C$ admits a complete hyperbolic metric, as we wanted to show.

It remains to construct the submanifolds $V_n$ of $\BS^3$ satisfying (1)-(4). We will proceed by induction, constructing $V_n$ as a regular neighborhood of linked $\theta$-graphs; by a $\theta$-graph, we mean a trivalent graph with $2$ vertices and without separating edges. To begin, let $X,X'\subset\BS^3$ be disjoint embedded $\theta$-graphs whose union $X\cup X'$ is a truly excellent graph, and let $V_1$ be a regular neighborhood of $X\cup X'$ in $V_0=\BS^3$. The $\theta$-graphs $X$ and $X'$ exist by Myers's Theorem.

Suppose that we constructed $V_{n-1}$ and let $U\subset V_{n-1}$ be one of its connected components. By induction, $U$ is a genus 2 handlebody. We are going to construct a disconnected graph $L_U$ contained in the interior of $U$, such that each one of its connected components is a $\theta$-graph of at most diameter $2^{-n}$ and such that $U\setminus\CN(L_U)$ is truly excellent. Once this graph $L_U$ exists, we define $V_n$ as the union of the submanifolds $\CN(L_U)\subset U$ over all connected components $U$ of $V_{n-1}$. 

It remains to construct the link $L=L_U$ in the genus 2 handlebody $U$. To start, let $X\subset U$ be a spine of $U$, i.e., a $\theta$-graph whose complement is a product. We now take a very slim regular neighborhood $W$ of $X$ constructed out of closed topological balls
\begin{equation}\label{eq-balls}
W=A_1\cup A_2\cup B_1^1\cup\dots\cup B_{k_1}^1\cup B_1^2\cup\dots\cup B_{k_2}^2 \cup B_1^3\cup\dots\cup B_{k_3}^3
\end{equation}
satisfying:
\begin{itemize}
\item The interiors of all the pieces $A_i,B_l^j$ are disjoint.
\item The intersections $A_1\cap B_1^j$ and $A_2\cap B_{k_j}^j$ are 2-dimensional disks for $j=1,2,3$.
\item $B_i^j\cap B_{i+1}^j$ is a 2-dimensional disk for all $j=1,2,3$ and $i=1,\dots,k_j-1$.
\item All other intersections are trivial.
\end{itemize}
The reader should think of $A_1,A_2$ as regular neighborhoods of the two vertices of $X$ and for $j=1,2,3$ think of $A_1\cup B_1^j\cup\dots\cup B_{k_j}^j\cup A_2$ as a regular neighborhood of an edge of $X$; compare with Figure \ref{fig1}.

\begin{figure}[h]
        \centering
         \includegraphics[width=10cm]{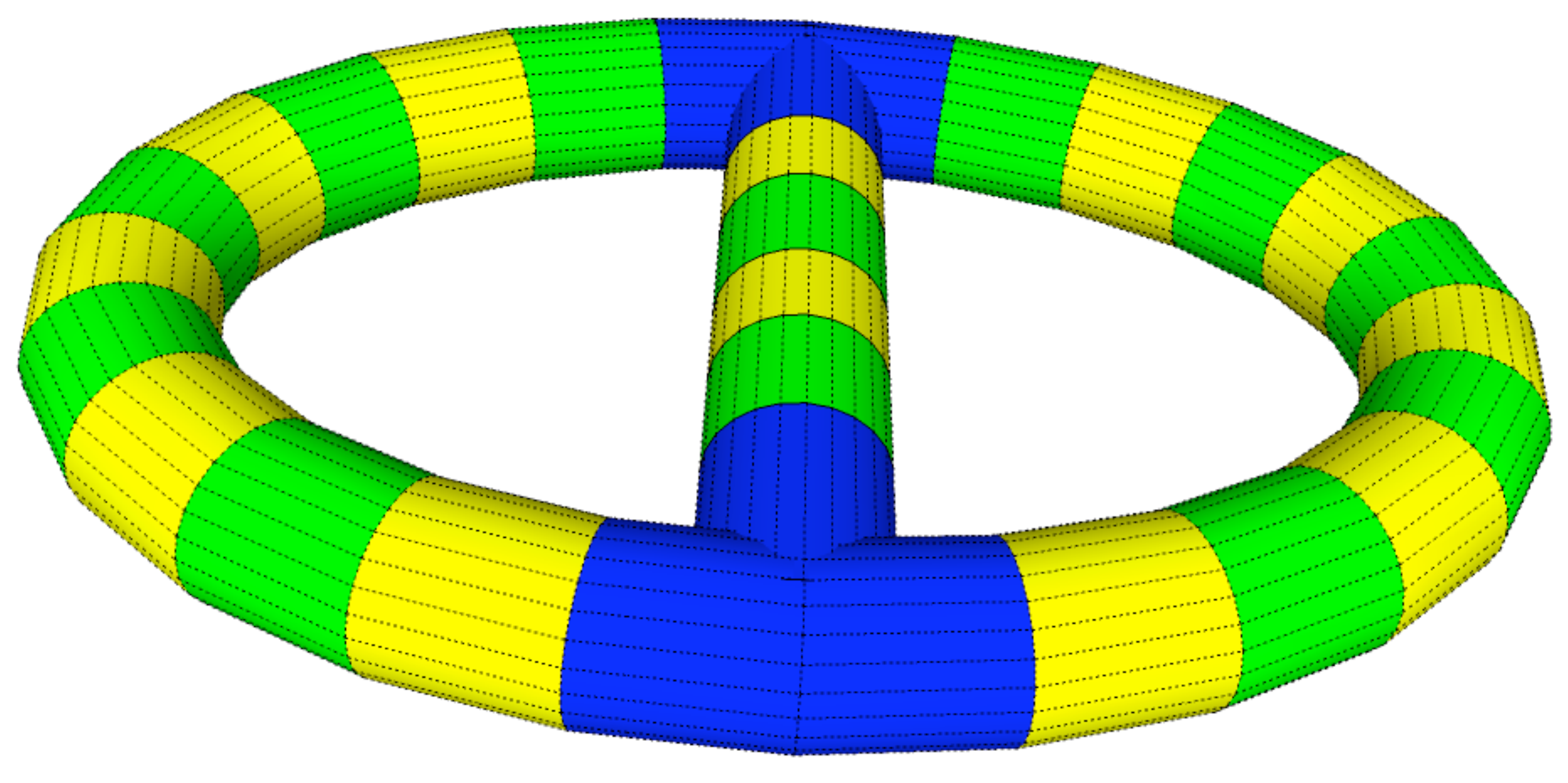}
         \caption{The neighborhood $W$ of a $\theta$-graph cut into the pieces $A_1, A_2$ (blue) and $B_k^j$ (green and yellow).}\label{fig1}
\end{figure}

We can clearly choose $W$ in \eqref{eq-balls} in such a way that each one of the pieces $A_i,B_l^j$ has diameter at most $2^{-(n+1)}$. We construct $L$ in such a way that each component is contained in the union of two of the pieces of \eqref{eq-balls}. This yields the desired diameter bound.

Denote by $\CB$ the collection of pieces $A_i,B_l^j$ over all choices of $i,j,l$, let $\calD$ be the union of the intersections $E\cap F$ with $E,F\in\CB$, and for every component $D$ of $\calD$ choose three distinct points $p^D_1,p^D_2,p^D_3$ in its interior. Notice that if $D$ is a component of $\calD$ contained in the boundary $\D E$ of some $E\in\CB$, then $D\setminus\{p^D_1,p^D_2,p^D_3\}$ is a $\pi_1$-injective subsurface of negative Euler characteristic of $\D E\setminus\bigcup_{D\in\calD, D\subset E}\{p^D_1,p^D_2,p^D_3\}$. Continuing with the same notation, for each component $D$ of $\calD$ contained in $\D E$ choose a properly embedded tripod $T(E,D)$ with endpoints in the punctures of $D$ and assume that $T(E,D)\cap T(E,D')=\emptyset$ for any two components $D,D'$ of $\calD$ contained in $\D E$. Note that each tripod is half of a $\theta$-graph. See Figure \ref{fig2}. By Myers's Theorem we can assume that for each $E\in\CB$ the graph 
$$T(E)=\cup_{D\subset\calD\cap\D E}T(E,D)$$
is truly excellent.

\begin{figure}[h]
        \centering
         \includegraphics[width=8cm]{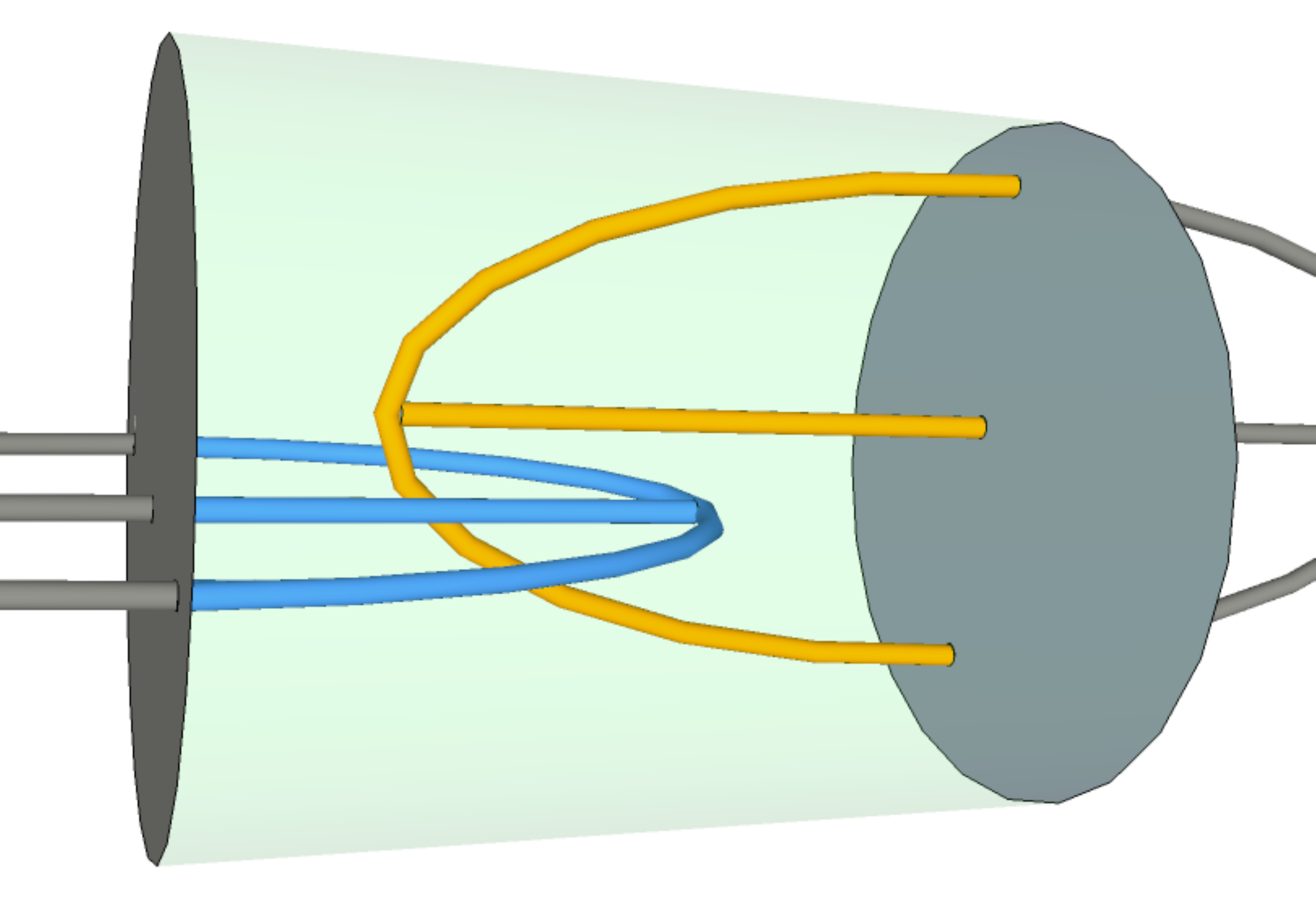}
         \caption{A piece $B_k^j$ and two linked, albeit not excellent, half-$\theta$-graphs.}\label{fig2}
\end{figure}

Notice that $L=\cup_ET(E)$ is a union of $\theta$-graphs, and each of its components is contained in the union of two adjacent $E,F\in\CB$. It remains to prove that $L$ is truly excellent in the handlebody $U$. To see this, notice that $U\setminus L$ is homeomorphic to $W\setminus L$, and that, by construction, cutting $W\setminus L$ along $\calD\setminus L$ determines a decomposition satisfying the assumptions of Lemma \ref{gluing}. This completes the proof of Theorem \ref{sat1}.
\end{proof}

\bigskip

\noindent Department of Mathematics, University of British Columbia.
\newline \noindent
\texttt{jsouto@math.ubc.ca}

\bigskip

\noindent Department of Mathematics, University of Michigan.
\newline \noindent
\texttt{stoverm@umich.edu}

\end{document}